
\newif\ifabstr
\abstrfalse 

\documentclass[11pt, a4paper]{article}
\usepackage{fullpage}

\usepackage[latin1]{inputenc}

\usepackage{amsmath}
\usepackage{amssymb,amsthm,graphicx}
\usepackage[dvips]{epsfig}
\usepackage{amsfonts}
\usepackage{xcolor}
\usepackage{hyperref}
\usepackage{enumerate}
\usepackage[normalem]{ulem} 
\usepackage{authblk}

\newtheorem{theorem}{Theorem}

\theoremstyle{remark}

\theoremstyle{definition}



\definecolor{darkgreen}{rgb}{0,.5,0}

\newcounter{sideremark}




\DeclareMathOperator{\sd}{sd}

\DeclareMathOperator{\wsat}{wsat}





\title{A note on the computational complexity of weak saturation
}
\ifabstr
\author{}
\else
\author{Martin Tancer}

\author{Mykhaylo Tyomkyn\thanks{Both authors were supported by GA\v{C}R grant 25-17377S. 
}}

\affil{Department of Applied Mathematics, Faculty of Mathematics and Physics,
Charles~University, Prague, Czech Republic}
\fi

\date{}

\begin{document}

\maketitle

\begin{abstract}
We prove that determining the weak saturation number of a host graph $F$ with respect to a pattern graph $H$ is already a computationally hard problem when $H$ is the triangle. As our main tool we establish a connection between weak saturation and shellability of simplicial complexes.
\end{abstract}

\section{Introduction} 

\paragraph{Weak saturation.}
Let $F$ and $H$ be graphs and let $G$ be a spanning subgraph of $F$. We say
that $G$ is \emph{weakly $H$-saturated in $F$}, if the edges of $E(F) \setminus
E(G)$ can be ordered into a sequence $e_1, \dots, e_m$ in such a way that for
every $i \in [m]$ the graph obtained from $G$ by adding the edges $e_1, \dots,
e_i$ contains a copy of $H$ which contains $e_i$. The \emph{weak saturation
number} $\wsat(F,H)$ is defined as the minimum number of edges of a graph which
is weakly $H$-saturated in $F$.


The concept of weak saturation was first introduced in 1968 by Bollob\'as~\cite{BB68} who considered the case when $F$ and $H$ are complete graphs, and conjectured that $\wsat(K_n,K_t)=\binom{n}{2}-\binom{n-t+2}{2}$. This was confirmed by Frankl~\cite{Frankl82}, and, independently, Kalai~\cite{Kal84, Kal85} (a version for matroids was proven earlier by Lov\'asz~\cite{Lovasz77}), and extended by Alon~\cite{Alon} and Blokhuis~\cite{blokhuis1990solution}. Subsequently, weak saturation was studied for different classes of graphs and (in the analogous setting) hypergraphs, see~\cite{Alon, BBMR, Balogh98, btt21,
	Erdos91, Faudree14,  Morrison18, MoshkShap, Pikhurko01a,  Pikhurko01b, ShTy, TZ1, TZ2, Tuza88, Tuza92}. As a common theme,  upper bounds on $wsat(F,H)$ are usually established via simple constructions, while proving lower bounds tends to be much harder, and typically requires methods from algebra or geometry. 

In this note we show that any kind of classification of weak saturation numbers
in full generality is hopeless unless P = NP. More concretely, we show that
determining the weak saturation number is already hard in a seemingly very
simple case when $H$ is the complete graph on three vertices $K_3$:

\begin{theorem}
\label{t:main}
  Given on input a graph $F$ with $n$ vertices, it is NP-hard to decide whether $\wsat(F, K_3) =
  n-1$.
\end{theorem}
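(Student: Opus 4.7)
My plan is to reformulate the condition $\wsat(F, K_3) = n-1$ as a shellability-type property of the $2$-dimensional simplicial complex of triangles of $F$, and then to reduce to it a known NP-hard problem on $2$-complexes.

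First I would establish that for every connected graph $F$ on $n$ vertices, $\wsat(F, K_3) \geq n-1$. The key point is that a weakly $K_3$-saturated subgraph $G \subseteq F$ is necessarily connected: when an edge $uv$ is added, the completed triangle furnishes a vertex $w$ adjacent in the current subgraph to both $u$ and $v$, so $u$ and $v$ already lie in the same component. Hence adding edges never merges components, and since $F$ is connected, $G$ was connected to begin with, giving $|E(G)| \geq n-1$. So Theorem~\ref{t:main} reduces to showing that it is NP-hard to decide whether $F$ admits a weakly $K_3$-saturated spanning tree. Unpacking, this asks whether there exist a spanning tree $T$, an ordering $e_1, \dots, e_m$ of $E(F) \setminus E(T)$, and triangles $\tau_1, \dots, \tau_m$ of $F$ with $e_i \in \tau_i$ and the other two edges of $\tau_i$ lying in $E(T) \cup \{e_1, \dots, e_{i-1}\}$. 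Note that each triangle of $F$ can appear in this list at most once, since it can only be used on the latest of its three edges to be added.

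This latter condition is a shellability-flavored property of the $2$-complex $\Delta(F)$ whose $2$-faces are the triangles of $F$: the $2$-faces must be orderable so that each contributes a genuinely new edge, relative to a prescribed spanning tree of the $1$-skeleton. My plan is then to reduce from a known NP-hard decision problem on $2$-dimensional simplicial complexes, shellability being the natural candidate. Given such a complex $\Delta$, I would construct a graph $F_\Delta$ equipped with auxiliary ``gadget'' vertices so that the triangles of $F_\Delta$ correspond in a controlled way to the $2$-faces of $\Delta$, and verify that the target property passes back and forth across the construction.

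The principal obstacle is the gadget construction. Whenever three vertices of $\Delta$ span three edges but the corresponding filled triangle is not a $2$-face, a naive realization produces a spurious triangle in $F_\Delta$ that lets the weak-saturation process bypass the intended shelling. I would block such triangles by local modifications such as vertex-splits or edge-subdivisions, while guaranteeing that every actual $2$-face of $\Delta$ survives as a unique triangle of $F_\Delta$, that every remaining triangle of $F_\Delta$ is transparently accounted for by a gadget, and that the gadgets' contribution to $|V(F_\Delta)|$ is predictable so that the lower bound $|V(F_\Delta)| - 1$ tracks exactly. The bulk of the work will be in designing these gadgets and checking both directions of the equivalence --- that a shelling of $\Delta$ lifts to a weak-saturation order achieving the bound, and that any such order in $F_\Delta$ projects back to a shelling of $\Delta$.
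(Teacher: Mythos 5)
Your proposal correctly identifies the two pillars of the argument: the lower bound $\wsat(F,K_3)\ge n-1$ for connected $F$ via the observation that a weakly $K_3$-saturated subgraph must itself be connected, and the reformulation of the equality case as a shelling-like ordering of triangles relative to a spanning tree. You also correctly pinpoint the central technical obstruction: ``spurious'' $K_3$'s in the $1$-skeleton that do not bound a $2$-face of the complex.

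However, two genuine gaps remain, and they are exactly where the hard work lies. First, the gadget construction you promise is entirely unspecified; the paper avoids the need for it altogether by observing that barycentric subdivisions are \emph{flag} complexes, so that after replacing the complex from the NP-hardness-of-shellability result of Goaoc, Pat\'ak, Pat\'akov\'a, Tancer and Wagner by its second barycentric subdivision, every $K_3$ in the $1$-skeleton automatically bounds a $2$-face. This is both simpler and safer than vertex-splits or edge-subdivisions, which would have to be checked not to interfere with shellability.

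Second, and more fundamentally, your claimed equivalence ``$\Delta$ shellable iff $\wsat(F_\Delta,K_3)=|V(F_\Delta)|-1$'' is not correct for an arbitrary $2$-complex $\Delta$. A weak-saturation ordering need not touch every triangle: if all three edges of a $2$-face are already present, that face is simply never used. So a weak-saturation spanning tree gives, at best, a \emph{collapsing} of a subcomplex obtained by deleting the unused $2$-faces --- not a shelling of $\Delta$, which by definition must order \emph{all} facets with pure codimension-one intersections. The reverse implication in your plan (``any such order projects back to a shelling'') will therefore fail in general. The paper gets around this by routing the converse through collapsibility: the weak saturation yields collapsibility of $L_\phi$ after deleting some $k$ triangles, an Euler characteristic count forces $k=t$, and the equivalence with satisfiability (hence with shellability) is then supplied by the specific properties of the Goaoc et al. complexes, for which shellability, collapsibility after removing $t$ triangles, and collapsibility after removing \emph{some} triangles all coincide. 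Without that external input, the equivalence you want simply is not a theorem about general $2$-complexes.
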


Note that $\wsat(F, K_3) \geq n-1$ for any connected $n$-vertex graph $F$, as any
weakly $K_3$-saturated graph in $F$ must be spanning. In our reduction $F$ will always be connected.

Our main tool to prove Theorem~\ref{t:main} is to establish a connection
between weak saturation and shellability (and collapsibility) of simplicial
complexes. We point out that a recent preprint~\cite{chakraborti-cho-kim-kim24}
establishes a connection between weak saturation and $d$-collapsibility
(closely related to collapsibility) in the context of fractional Helly type
theorems. Our setting, however, is quite different.

\paragraph{Simplicial complexes, shellability and collapsibility.}
Let us recall that a(n abstract) \emph{simplicial complex} is a set
system $K$ such that if $\sigma \in K$ and $\tau \subseteq \sigma$, then $\tau \in K$. We will
consider only finite simplicial complexes. The set of \emph{vertices} of $K$ is
the set $\bigcup K$. The elements of $K$ are called \emph{faces} of $K$ and the
\emph{dimension} of a face $\sigma \in K$ is defined as $\dim \sigma := |\sigma| - 1$. The
faces of dimension $1$ are \emph{edges} and the faces of dimension $2$ are
\emph{triangles}. (A triangle in a simplicial complex should not be confused
with a graph-theoretic triangle, a copy of $K_3$. We will avoid using the
notion ``triangle'' in the latter context.) The \emph{dimension} of the complex,
$\dim K$ is defined as the maximum of the dimensions of faces in $K$.
Given a non-negative integer $k$, the
\emph{$k$-skeleton} of a simplicial complex $K$ is a subcomplex of $K$ denoted
$K^{(k)}$ consisting of faces of $K$ of dimension at most $k$. From now on we
regard graphs as (at most) $1$-dimensional simplicial complexes. In particular, the $1$-skeleton of a simplicial complex is a graph.

Given a sequence
$\vartheta_1, \dots, \vartheta_k$ of faces of $K$ we denote by $K[\vartheta_1,
\dots, \vartheta_k]$ the subcomplex of $K$ \emph{induced} by these faces, that
is, the subcomplex formed by faces $\sigma$ such that $\sigma \subseteq
\vartheta_i$ for some $i \in [k]$. An inclusion maximal face of a
simplicial complex is a \emph{facet} and a simplicial complex is \emph{pure} if
all facets have the same dimension.
A pure $d$-dimensional complex $K$ is \emph{shellable} if there is an ordering
$\vartheta_1, \dots, \vartheta_m$ of all facets of $K$ such that for every $i
\in \{2, \dots, m\}$ the complex $K[\vartheta_i] \cap K[\vartheta_1, \dots,
\vartheta_{i-1}]$ is pure and $(d-1)$-dimensional.\footnote{In other words,
$\vartheta_i$ meets the previous facets in a pure $(d-1)$-dimensional subcomplex.}

A simplicial complex $K'$ arises from $K$ by an \emph{elementary collapse} if
there is a face $\tau$ of $K$ contained in a single facet $\sigma$ distinct
from $\tau$ and $K'$ is obtained from $K$ by removing all faces containing
$\tau$. A simplicial complex $K$ \emph{collapses} to a subcomplex $L$, if there is a sequence
$K = K_1, K_2, \dots, K_\ell = L$ of simplicial complexes such that $K_{i+1}$ is
obtained from $K_i$ by an elementary collapse for $i \in [\ell - 1]$. A
simplicial complex $K$ is \emph{collapsible} if it collapses to a point 
(an arbitrary vertex of $K$).

The \emph{reduced Euler characteristic} of a complex $K$ is defined as
\[
  \tilde \chi (K) = \sum_{i=-1}^{\dim K} (-1)^i f_i(K)
\]
where $f_i(K)$ is the number of $i$-dimensional faces of $K$. (Note that the
empty set has the dimension equal to $-1$.) Given a simplicial complex $K$, its
\emph{barycentric subdivision} $\sd K$ is a complex whose vertices are nonempty faces
of $K$ and whose faces are collections $\{\vartheta_1, \dots, \vartheta_k\}$ of
faces of $K$ with $\emptyset \neq \vartheta_1 \subsetneq \vartheta_2 \subsetneq
\cdots \subsetneq \vartheta_k$.

\paragraph{Hardness of shellability.}
In~\cite{goaoc-patak-patakova-tancer-wagner19} Goaoc, Pat\'ak, Pat\'akov\'a,
Tancer and Wagner proved that shellability is NP-hard. We state a corollary of
the main technical proposition from~\cite{goaoc-patak-patakova-tancer-wagner19}
in a way convenient for us. In the statement we use \emph{3-CNF formulas}, that
is, formulas in a conjunctive normal form where each clause contains three
literals. We skip details, referring the reader to ~\cite{goaoc-patak-patakova-tancer-wagner19}, as we use 3-CNF formulas only implicitly. 
We only need the fact that the decision problem whether a 3-CNF formula
is satisfiable is a well known NP-hard problem, known as
\emph{3-satisfiability}.

\begin{theorem}[Essentially Proposition~8 from~\cite{goaoc-patak-patakova-tancer-wagner19}]
\label{t:shell_hard}
  There is a polynomial time algorithm that produces from a given 3-CNF formula
  $\phi$ with $t$ variables a pure connected 2-dimensional complex $K_\phi$ with
  $\tilde\chi(K_\phi) = t$ such that the
  following statements are equivalent:

  \begin{itemize}
    \item[(i)] The formula $\phi$ is satisfiable.
    \item[(ii)] The second barycentric subdivision $\sd^2 K_\phi$ is shellable.
    \item[(ii')] The third barycentric subdivision $\sd^3 K_\phi$ is shellable.
    \item[(ii'')] The forth barycentric subdivision $\sd^4 K_\phi$ is shellable.
    \item[(iii)] The complex $K_\phi$ is collapsible after removing some $t$ triangles.
    \item[(iii')] The barycentric subdivision $\sd K_\phi$ is collapsible after removing some $t$ triangles.
    \item[(iii'')] The second barycentric subdivision $\sd^2 K_\phi$ is collapsible after removing some $t$ triangles.
  \end{itemize}
\end{theorem}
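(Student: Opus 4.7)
My plan is to combine Proposition~8 of~\cite{goaoc-patak-patakova-tancer-wagner19} with standard subdivision arguments to derive all seven equivalences. First, I will invoke the polynomial-time construction of $K_\phi$ from that reference, which produces a pure connected $2$-dimensional complex with $\tilde\chi(K_\phi) = t$ and gives the equivalence of (i) with (at least) the most-subdivided shellability and collapsibility-after-removal conditions in the list.

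Next, I will establish the upward implications (ii) $\Rightarrow$ (ii') $\Rightarrow$ (ii'') by invoking the classical fact that barycentric subdivision preserves shellability of pure simplicial complexes. For the analogous chain (iii) $\Rightarrow$ (iii') $\Rightarrow$ (iii''), I will use the following subdivision argument for collapsibility-after-removal: given $R$ a set of triangles of $K$ with $|R| = t$ and $K \setminus R$ collapsible, I choose $R'$ a set of triangles of $\sd K$ of size exactly $t$ by picking one of the six small triangles subdividing each $\vartheta \in R$. The key local fact, which I will verify by an explicit sequence of elementary collapses, is that $\sd\vartheta$ minus such a small triangle collapses onto the subdivided boundary hexagon $\partial \sd\vartheta$; performed in parallel for every $\vartheta \in R$ (the internal edges and the barycenter $m_\vartheta$ being local to $\sd\vartheta$, so the collapses do not interfere), this reduces $\sd K \setminus R'$ to $\sd(K \setminus R)$, which collapses to a point since $K \setminus R$ does. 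Euler-characteristic bookkeeping, $\tilde\chi(\sd K) = \tilde\chi(K) = t$, confirms that $|R'| = t$ is exactly the right count.

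To close the cycle, I will still need direct implications (i) $\Rightarrow$ (ii) and (i) $\Rightarrow$ (iii) (and possibly a few more, depending on exactly what is provided by~\cite{goaoc-patak-patakova-tancer-wagner19}). I plan to establish these by explicit constructions from a satisfying assignment of $\phi$: a shelling of $\sd^2 K_\phi$ and a $t$-element set of triangles of $K_\phi$ whose removal yields a collapsible complex. The construction will exploit the gadget-level structure of $K_\phi$ inherited from Goaoc et al.: each variable- and clause-gadget admits a local shelling or collapse once its literal assignment is fixed, and the satisfying assignment ensures these local choices glue into a global order without conflict. The hard part will be this gadget-level assembly at the least-subdivided level stated in the theorem, which is the primary new content beyond the cited Proposition~8.
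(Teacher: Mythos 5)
Your proposal contains a genuine gap, and also reflects an inverted assumption about what the cited reference actually provides. You assume Proposition~8 of Goaoc et al.\ gives the equivalence of (i) with ``the most-subdivided'' conditions (ii$''$) and (iii$''$), i.e.\ at $\sd^4$ and $\sd^2$, and that you must therefore supply the least-subdivided directions yourself. In fact it is the other way around: the joint proof of Theorems~4 and~5 in that paper already establishes the equivalences among (i), (ii), (ii$'$), (iii), (iii$'$) --- in particular (i)~$\Leftrightarrow$~(ii) at the $\sd^2$ level is exactly what the cited construction delivers, via a chain of implications of the shape (ii)~$\Rightarrow$~(ii$'$)~$\Rightarrow$~(iii$'$)~$\Rightarrow$~(i) together with (i)~$\Rightarrow$~(ii). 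The only extension needed is to \emph{higher} subdivision levels, namely (ii$''$) and (iii$''$), and the paper handles this by noting that the same reasoning applies one subdivision deeper.

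Because of that misreading, the step you single out as ``the primary new content'' --- constructing a shelling of $\sd^2 K_\phi$ directly from a satisfying assignment by assembling gadget-level local shellings --- is precisely the substance of the cited Proposition~8. You neither carry it out nor cite a source for it, but only sketch an intention; this is a multi-page argument in Goaoc et al.\ and cannot be waved through. As it stands your proposal proves only the easy upward implications, and the cycle never closes. The easy parts that you do supply are sound: barycentric subdivision preserves shellability, and your explicit local collapse showing that $\sd\vartheta$ minus one of its six small triangles collapses to the boundary hexagon is correct and does yield (iii)~$\Rightarrow$~(iii$'$)~$\Rightarrow$~(iii$''$). Likewise the Euler-characteristic bookkeeping that $|R'|$ must stay equal to $t$ is right. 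But these are the directions the paper also treats as routine; the missing hard direction is the crux, and it should be obtained by citation to Proposition~8 (and, for connectedness, to the construction itself or to the Skotnica--Tancer observation that $K_\phi$ is homotopy equivalent to a wedge of spheres), not by an unwritten gadget argument.
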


We are really interested only in the items (i), (ii) and (iii''). The remaining
items are auxiliary for explaining the proof.

Because Proposition~8 from~\cite{goaoc-patak-patakova-tancer-wagner19} is
not formulated exactly this way, we briefly explain how Theorem~\ref{t:shell_hard}
follows from~\cite{goaoc-patak-patakova-tancer-wagner19}: The construction of
$K_\phi$ is according to
~\cite[Proposition~8]{goaoc-patak-patakova-tancer-wagner19}. The fact that the
number of variables of $\phi$ equals $\tilde\chi(K_\phi)$ is the content
of~\cite[Proposition~12]{goaoc-patak-patakova-tancer-wagner19}. Then the
statements (i), (ii), (ii'), (iii) and (iii') of Theorem~\ref{t:shell_hard} are explicitly 
stated as equivalent statements in the (joint) proof of Theorems~4 and~5
in~\cite{goaoc-patak-patakova-tancer-wagner19}. It remains to argue that (ii'')
and (iii'') are equivalent as well. The proof of Theorems~4 and~5
in~\cite{goaoc-patak-patakova-tancer-wagner19} contains in particular
implications (ii) $\Rightarrow$ (ii') $\Rightarrow$ (iii') $\Rightarrow$ (i).
The implications (ii') $\Rightarrow$ (ii'') $\Rightarrow$ (iii'') $\Rightarrow$
(i) work with the exactly same reasoning, which proves the equivalence.
Finally, it is possible to check that $K_\phi$ is connected directly from the
construction in~\cite{goaoc-patak-patakova-tancer-wagner19}. Alternatively,
Skotnica and Tancer proved~\cite[Appendix~A]{skotnica-tancer23} that
$K_\phi$ from exactly this construction is homotopy equivalent to the wedge of
spheres. This also implies that $K_\phi$ is connected.

In our proof of Theorem~\ref{t:main}, we use Theorem~\ref{t:shell_hard}
with $L_\phi = \sd^2 K_\phi$, extending it to the following
setting. 

\begin{theorem}
\label{t:auxiliary}
   There is a polynomial time algorithm that produces from a given 3-CNF formula
  $\phi$ with $t$ variables a pure 2-dimensional connected complex $L_\phi$ with $\tilde
  \chi(L_\phi) = t$ such that the
  following statements are equivalent:
  \begin{enumerate}[(i)]
    \item The formula $\phi$ is satisfiable.
    \item The complex $L_\phi$ is shellable.
    \item The complex $L_\phi$ is collapsible after removing some $t$ triangles.
    \item We have $\wsat(L_\phi^{(1)}, K_3) = n-1$ where $n$ is the number of
      vertices of $L_\phi$.
    \item The complex $L_\phi$ is collapsible after removing some number of
      triangles.
  \end{enumerate}
\end{theorem}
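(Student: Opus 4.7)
The plan is to set $L_\phi := \sd^2 K_\phi$, so that the equivalences (i) $\Leftrightarrow$ (ii) $\Leftrightarrow$ (iii) of Theorem~\ref{t:auxiliary} match items (i), (ii''), (iii'') of Theorem~\ref{t:shell_hard} verbatim (using that barycentric subdivision preserves pureness, dimension, connectedness, and reduced Euler characteristic). The remaining task is to weave (iv) and (v) into the loop.

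The direction (v) $\Rightarrow$ (iii) is a reduced Euler characteristic count: every elementary collapse preserves $\tilde\chi$ and a point has $\tilde\chi=0$, so collapsibility of $L_\phi$ after removing $s$ triangles forces $t-s=0$, i.e.\ $s=t$. The direction (iii) $\Rightarrow$ (v) is trivial.

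For (iii) $\Rightarrow$ (iv) I would convert a collapse of $L := L_\phi$ minus $t$ triangles into a weak saturation certificate. First normalize the collapse so that every elementary move is either \textbf{(F)} remove a free edge together with its unique triangle, or \textbf{(V)} remove a leaf vertex with its unique edge (each vertex-triangle move decomposes as one (F) followed by one (V), since in that situation one of the two edges at the apex vertex is automatically free). The edges removed by (V)-moves form a spanning tree $T$ of $L_\phi^{(1)}$: they number $n-1$, and the iterative leaf-peeling keeps all of them connected to the final vertex. Listing the (F)-edges $e_{i_1},\dots,e_{i_k}$ in forward order and then adding them to $T$ in the \emph{reverse} order, I would verify that at the forward (F) step removing $(e_{i_j},\sigma_{i_j})$ the other two edges of $\sigma_{i_j}$ are still present, so each is either a $T$-edge or an (F)-edge of a later step, and hence already lies in $T \cup \{e_{i_\ell}:\ell>j\}$ at the moment $e_{i_j}$ is added. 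This makes $T$ weakly $K_3$-saturated in $L_\phi^{(1)}$, matching the universal lower bound $n-1$ from the introduction.

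For (iv) $\Rightarrow$ (v) (and in fact directly $\Rightarrow$ (iii)), I would exploit the structural fact that in any barycentric subdivision $\sd K$, every $K_3$-subgraph of the $1$-skeleton bounds a $2$-face of $\sd K$: three pairwise comparable non-empty faces of $K$ form a chain, hence a face of $\sd K$. This is precisely why the construction uses $\sd^2 K_\phi$ rather than $K_\phi$ itself. Given a minimum weak saturation sequence $e_1,\dots,e_q$ with spanning tree $T$, this observation lets me pick a witness triangle $\sigma_i$ for each $e_i$ which is a genuine face of $L_\phi$. Since the witnesses for $\sigma_i$ lie in $T \cup\{e_1,\dots,e_{i-1}\}$, no $e_j$ with $j<i$ equals $e_i$ and no $\sigma_j$ with $j<i$ contains $e_i$; hence in the subcomplex $M := T\cup\{\sigma_1,\dots,\sigma_q\}$ one can (F)-collapse the pairs $(\sigma_j,e_j)$ in reverse order $j=q,q-1,\dots,1$ down to $T$, and then (V)-collapse $T$ to a point. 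Thus $L_\phi$ collapses after removing the triangles outside $M$, giving (v). The central obstacle is this bridging structural fact about $\sd K$; without it the triangles provided by the saturation sequence would only be graph-theoretic and not faces, severing the link to collapsibility. A secondary obstacle is the combinatorial bookkeeping in (iii) $\Rightarrow$ (iv), tracking which witness edge of each $\sigma_{i_j}$ is still in the graph when $e_{i_j}$ is inserted.
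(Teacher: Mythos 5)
Your proposal is correct, and most of it runs parallel to the paper: the choice $L_\phi = \sd^2 K_\phi$, the invocation of Theorem~\ref{t:shell_hard} for (i) $\Leftrightarrow$ (ii) $\Leftrightarrow$ (iii), the Euler-characteristic count for (v) $\Rightarrow$ (iii), and the flagness-of-barycentric-subdivision argument for (iv) $\Rightarrow$ (v) (including the crucial observation that the induced witness triangles $\sigma_i$ are pairwise distinct because $\sigma_j$ with $j<i$ cannot contain $e_i$). The one place you genuinely diverge is in closing the cycle: the paper proves (ii) $\Rightarrow$ (iv), building a spanning tree inductively along a \emph{shelling} (adding a vertex and an edge whenever a facet attaches along a single edge, and reading the remaining edges off the shelling order as the saturation sequence), whereas you prove (iii) $\Rightarrow$ (iv), starting from a \emph{collapse}. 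Your route first normalizes the collapse sequence into (F)-moves (free edge $+$ triangle) and (V)-moves (free vertex $+$ edge), observing that a vertex-triangle collapse factors into an (F) followed by a (V); the $n-1$ (V)-edges then form a spanning tree $T$, and replaying the (F)-pairs in reverse order gives the saturation sequence, since at each (F)-step the other two edges of the collapsed triangle are still present in the forward collapse and hence already lie in $T$ together with the later-added (F)-edges. Both arguments are correct; yours has the aesthetic advantage of being an exact inverse of the (iv) $\Rightarrow$ (v) direction (collapse $\leftrightarrow$ saturation sequence), so the two bridging implications become dual to one another, at the cost of the normalization lemma. The paper's shelling route is slightly shorter because shellings hand you the attachment structure for free without needing to decompose mixed collapse moves. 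One tiny caveat: your parenthetical ``(and in fact directly $\Rightarrow$ (iii))'' for the (iv) direction is not really a shortcut --- pinning down that exactly $t$ triangles are removed still requires the same Euler-characteristic computation used in (v) $\Rightarrow$ (iii).
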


The proof of Theorem~\ref{t:auxiliary} is given in the next section. Theorem~\ref{t:main} follows immediately from the equivalence of (i) and
(iv) in Theorem~\ref{t:auxiliary}.

\begin{proof}[Proof of Theorem~\ref{t:main}]
  The equivalence of (i) and (iv) provides a polynomial time 
  reduction from 3-satisfiability to determining whether $\wsat(F,
  K_3) = n-1$. (Note that the graph $L_\phi^{(1)}$ can be constructed from $L_\phi$ in
  polynomial time.)  Given that 3-satisfiability is NP-hard, it follows that the
  latter problem is NP-hard as well.
\end{proof}

Finally, we remark that the items (ii), (iii) and (v) in
Theorem~\ref{t:auxiliary} are again only auxiliary in order to show conveniently the
equivalence of (i) and (iv).

\section{The proof of Theorem~\ref{t:auxiliary}}

The aim of this section is to prove Theorem~\ref{t:auxiliary}, completing the proof of Theorem~\ref{t:main}.

As stated earlier, given a 3-CNF formula $\phi$ we take $K_\phi$ from
Theorem~\ref{t:shell_hard} and set $L_\phi := \sd^2 K_\phi$. Given that
$K_\phi$ is $2$-dimensional, the complexity of $L_\phi$ grows only by a
constant factor when compared with $K_\phi$, and so $L_\phi$ can be constructed in
polynomial time in the size of $K_\phi$, hence in polynomial time in the size of
$\phi$. The complex $L_\phi$ is connected because $K_\phi$ is connected. We also remark that $\tilde \chi (L_\phi) = \tilde \chi (K_\phi) = t$
because a complex and its barycentric subdivision have the same reduced Euler
characteristic.\footnote{A complex and its barycentric subdivision are homeomorphic (see, for example, \cite[\S 15]{munkres84}) and
the (reduced) Euler characteristic is an invariant under homeomorphism, even
under homotopy~\cite[Theorem~2.44]{hatcher02}. Alternatively, in the case of
$2$-complexes, one can check
directly from the definition of the barycentric subdivision that given a
2-complex $K$ with $n$ vertices, $m$ edges and $t$ triangles, $\sd K$ contains
$n+m+t$ vertices, $2m + 6t$ edges and $6t$ triangles.}
The items (i), (ii) and (iii) of Theorem~\ref{t:auxiliary} are
equivalent due to Theorem~\ref{t:shell_hard}. We will now show implications (ii)
$\Rightarrow$ (iv) $\Rightarrow$ (v) $\Rightarrow$ (iii), completing the proof.

\begin{proof}[Proof of (ii) $\Rightarrow$ (iv)]
  Consider a \emph{shelling} $\vartheta_1, \dots, \vartheta_m$ of $L_\phi$,
  that is, a
  sequence of all facets of $L_\phi$ witnessing that $L_\phi$ is shellable.
  Given that $L_\phi$ is pure 2-dimensional, all facets are triangles. 
  
  We construct a spanning tree $G$ in the 1-skeleton $L_\phi^{(1)}$ in the
  following way. First we set $G_1$ inside $\vartheta_1$ to contain
  all three vertices and two arbitrarily chosen edges. Next, for $i \in \{2,
  \dots, m\}$, we inductively
  assume that we have a spanning tree $G_{i-1}$ of $L_\phi[\vartheta_1, \dots,
  \vartheta_{i-1}]^{(1)}$ and we construct a spanning tree $G_i$ of $L_\phi[\vartheta_1, \dots,
  \vartheta_{i}]^{(1)}$; see Figure~\ref{f:add_theta_i} for an illustration. We distinguish two cases. If $\vartheta_i$ meets the
  preceding triangles in two or three edges we set $G_i := G_{i-1}$. Note that 
  $L_\phi[\vartheta_1, \dots, \vartheta_{i-1}]$ and $L_\phi[\vartheta_1, \dots,
  \vartheta_{i-1}]$ have the same sets of vertices in this case. Thus $G_i$ is
  indeed a spanning tree. If $\vartheta_i$ meets the preceding triangles in a
  single edge, then we add to $G_{i-1}$ the new vertex of $\vartheta_i$ (i.e.,
  the vertex not contained in $L_\phi[\vartheta_1, \dots, \vartheta_{i-1}]$)
  and one edge inside $\vartheta_i$ containing this vertex. Other cases are not
  possible because $\vartheta_1, \dots, \vartheta_m$ is a shelling. We set $G
  := G_m$.

  \begin{figure}
    \begin{center}
      \includegraphics{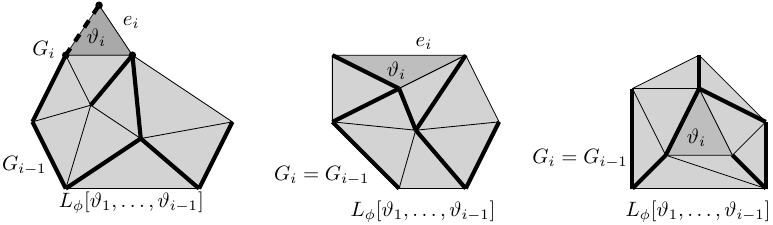}
      \caption{The figure displays three options how $\vartheta_i$ may meet
      $L_\phi[\vartheta_1,\dots,\vartheta_{i-1}]$ according to the number of
      shared edges. (The third displayed case is not fully realistic globally 
      because the displayed $L_\phi[\vartheta_1,\dots,\vartheta_{i-1}]$ is not
      shellable. Any realistic example unfortunately contains a nontrivial
      2-dimensional homology, which makes it harder to depict.)}
      \label{f:add_theta_i}
    \end{center}
  \end{figure}

  In order to finish the proof, we claim that $G$ is weakly $K_3$-saturated in
  $L_\phi^{(1)}$. Indeed, the saturating sequence follows the shelling. The
  first edge $e_1$ is the unique edge of $\vartheta_1$ not contained in $G_1$.
  This edge completes a copy of $K_3$ inside $\vartheta_1$ (more precisely
  $L_\phi[\vartheta_1]^{(1)}$).
  Next, for $i \in \{2, \dots, m\}$, we observe that $L_\phi[\vartheta_1, \dots,  \vartheta_{i}]^{(1)}$
  contains at most one more edge not contained in $G_i$ than $L_\phi[\vartheta_1, \dots,
  \vartheta_{i-1}]^{(1)}$. It is exactly one edge $e_i$ in the cases that $\vartheta_i$
  meets the preceding triangles in one or two edges; see again
  Figure~\ref{f:add_theta_i} for an example. (With a slight abuse of
  notation we denote the edge by $e_i$ though it may be not the $i$-th edge in the order, if some preceding edges are missing.) The edge $e_i$ again completes
  the copy of $K_3$ inside $\vartheta_i$, thereby inside $L_\phi[\vartheta_1,
  \dots,  \vartheta_{i}]^{(1)}$.
\end{proof}

\begin{proof}[Proof of (iv) $\Rightarrow$ (v)]
  The facts that $L_{\phi}$ is connected and $\wsat(L_\phi^{(1)}, K_3) = n-1$
  imply that there exists a
  spanning tree $G$ weakly $K_3$-saturated in $L_\phi^{(1)}$. We will show that
  $L_\phi^{(1)}$ collapses to $G$. This implies that $L_\phi^{(1)}$ is
  collapsible as any tree is collapsible.

  Let $e_1, \dots, e_m$ be a sequence of edges witnessing that $G$ is weakly
  $K_3$-saturated in $L_\phi^{(1)}$. For every such edge $e_i$ we fix a copy $J_i$ of $K_3$
  it creates. Now we crucially use that $L_\phi$ is a barycentric subdivision
  of another complex. It is well known and not hard to show (at least for
  $2$-complexes) that every copy of $K_3$ induces a triangle in $L_\phi$. (In
  general barycentric subdivisions are \emph{flag}, that is, every clique in
  the 1-skeleton induces a full simplex in the complex.) By $\vartheta_i$ we
  denote the triangle induced by $J_i$. We remark that the triangles
  $\vartheta_i$ are distinct because for $i < j$, $\vartheta_j$
  contains the edge $e_j$ while $\vartheta_i$ does not contain it.

 We set $L$ to be $L_\phi$ after removing all triangles that do not appear as
  $\vartheta_i$ for some $i \in [m]$. Now we perform elementary collapses on $L$ in
  the reverse order of $e_1, \dots, e_m$. That is, we first
  claim that $e_m$ is in a unique triangle $\vartheta_m$. This is indeed the case
  as the triangles $\vartheta_i$ with $i < m$ do not contain $e_m$. We perform
  an elementary collapse on $L$ removing $e_m$ and $\vartheta_m$. After
  performing this collapse we claim that $e_{m-1}$ is in a unique triangle
  $\vartheta_{m-1}$. This is indeed the case as $\vartheta_m$ has been already
  removed and the triangles $\vartheta_i$ with $i < m - 1$ do not contain
  $e_{m-1}$. We perform an elementary collapse on the intermediate complex
  removing $e_{m-1}$ and $\vartheta_{m-1}$. We continue this until we have collapsed $L$ to $G$ as required. (Note that every edge of $L$ outside $G$
  appears as some $e_i$ because $e_1, \dots, e_m$ witnesses that $G$ is weakly
  $K_3$ saturated in $L_\phi^{(1)}$ and $L_\phi^{(1)} = L^{(1)}$.)
\end{proof}

\begin{proof}[Proof of (v) $\Rightarrow$ (iii)]
	Let $L$ be a collapsible complex obtained from $L_\phi$ after removal
of $k$ triangles. We know that $\tilde \chi (L_\phi) = t$, thus $\tilde \chi
(L) = t - k$ immediately from the definition of the reduced Euler
  characteristic. Because collapses preserve the homotopy type (this is
  explained, e.g., in a slightly more general setting
  in~\cite[Chapter~3]{rourke-sanderson82}) and the (reduced)
  Euler characteristic is an invariant of the homotopy type (see~\cite[Theorem~2.44]{hatcher02}), we deduce $\tilde \chi
  (L) = \tilde \chi (pt)$ where $pt$ stands for a
  point.\footnote{Alternatively, one can check directly from the definition of
  an elementary collapse that it removes the same number of faces of odd and of even size.} However, $\tilde \chi
(pt) = 0$ (the empty set and the point itself contribute $-1$ and $1$,
respectively). Thus we deduce $k = t$, which proves (iii).  
\end{proof}

\section*{Acknowledgement}

We would like to thank Adam Rajsk\'y for helpful early discussions.  

\bibliographystyle{alpha}
\bibliography{wsathard}

\newcommand{\etalchar}[1]{$^{#1}$}
\begin{thebibliography}{BBMR12}

\bibitem[Alo85]{Alon}
N.~Alon.
\newblock An extremal problem for sets with applications to graph theory.
\newblock {\em J. Combin. Theory Ser. A}, 40(1):82--89, 1985.

\bibitem[BBMR12]{BBMR}
J.~Balogh, B.~Bollob\'{a}s, R.~Morris, and O.~Riordan.
\newblock Linear algebra and bootstrap percolation.
\newblock {\em J. Combin. Theory Ser. A}, 119(6):1328--1335, 2012.

\bibitem[Blo90]{blokhuis1990solution}
A.~Blokhuis.
\newblock Solution of an extremal problem for sets using resultants of
  polynomials.
\newblock {\em Combinatorica}, 10(4):393--396, 1990.

\bibitem[Bol68]{BB68}
B.~Bollob\'{a}s.
\newblock Weakly {$k$}-saturated graphs.
\newblock In {\em Beitr\"{a}ge zur {G}raphentheorie ({K}olloquium, {M}anebach,
  1967)}, pages 25--31. Teubner, Leipzig, 1968.

\bibitem[BP98]{Balogh98}
J.~Balogh and G.~Pete.
\newblock Random disease on the square grid.
\newblock In {\em Proceedings of the {E}ighth {I}nternational {C}onference
  ``{R}andom {S}tructures and {A}lgorithms'' ({P}oznan, 1997)}, volume~13,
  pages 409--422, 1998.

\bibitem[BTT23]{btt21}
D.~Bulavka, M.~Tancer, and M.~Tyomkyn.
\newblock Weak saturation of multipartite hypergraphs.
\newblock {\em Combinatorica}, 43:1081--1102, 2023.

\bibitem[CCKK24]{chakraborti-cho-kim-kim24}
D.~Chakraborti, M.~Cho, J.~Kim, and M.~Kim.
\newblock Colorful fractional {H}elly theorem via weak saturation.
\newblock {\em arXiv preprint arXiv:2408.15093}, 2024.

\bibitem[EFT91]{Erdos91}
P.~Erd\H{o}s, Z.~F\"{u}redi, and Z.~Tuza.
\newblock Saturated {$r$}-uniform hypergraphs.
\newblock {\em Discrete Math.}, 98(2):95--104, 1991.

\bibitem[FG14]{Faudree14}
R.~J. Faudree and R.~J. Gould.
\newblock Weak saturation numbers for multiple copies.
\newblock {\em Discrete Math.}, 336:1--6, 2014.

\bibitem[Fra82]{Frankl82}
P.~Frankl.
\newblock An extremal problem for two families of sets.
\newblock {\em European J. Combin.}, 3(2):125--127, 1982.

\bibitem[GPP{\etalchar{+}}19]{goaoc-patak-patakova-tancer-wagner19}
X.~Goaoc, P.~Pat\'{a}k, Z.~Pat\'{a}kov\'{a}, M.~Tancer, and U.~Wagner.
\newblock Shellability is {NP}-complete.
\newblock {\em J. ACM}, 66(3):21:1--21:18, 2019.

\bibitem[Hat02]{hatcher02}
A.~Hatcher.
\newblock {\em Algebraic topology}.
\newblock Cambridge Univ. Press, Cambridge, 2002.

\bibitem[Kal84]{Kal84}
G.~Kalai.
\newblock Weakly saturated graphs are rigid.
\newblock In {\em Convexity and graph theory ({J}erusalem, 1981)}, volume~87 of
  {\em North-Holland Math. Stud.}, pages 189--190. North-Holland, Amsterdam,
  1984.

\bibitem[Kal85]{Kal85}
G.~Kalai.
\newblock Hyperconnectivity of graphs.
\newblock {\em Graphs Combin.}, 1(1):65--79, 1985.

\bibitem[Lov77]{Lovasz77}
L.~Lov\'{a}sz.
\newblock Flats in matroids and geometric graphs.
\newblock In {\em Combinatorial surveys ({P}roc. {S}ixth {B}ritish
  {C}ombinatorial {C}onf., {R}oyal {H}olloway {C}oll., {E}gham, 1977)}, pages
  45--86, 1977.

\bibitem[MN18]{Morrison18}
N.~Morrison and J.~A. Noel.
\newblock Extremal bounds for bootstrap percolation in the hypercube.
\newblock {\em J. Combin. Theory Ser. A}, 156:61--84, 2018.

\bibitem[MS15]{MoshkShap}
G.~Moshkovitz and A.~Shapira.
\newblock Exact bounds for some hypergraph saturation problems.
\newblock {\em J. Combin. Theory Ser. B}, 111:242--248, 2015.

\bibitem[{M}un84]{munkres84}
{J}.~{R}. {M}unkres.
\newblock {\em {E}lements of Algebraic Topology}.
\newblock {A}ddison - {W}esley, 1984.

\bibitem[Pik01a]{Pikhurko01a}
O.~Pikhurko.
\newblock Uniform families and count matroids.
\newblock {\em Graphs Combin.}, 17(4):729--740, 2001.

\bibitem[Pik01b]{Pikhurko01b}
O.~Pikhurko.
\newblock Weakly saturated hypergraphs and exterior algebra.
\newblock {\em Combin. Probab. Comput.}, 10(5):435--451, 2001.

\bibitem[RS82]{rourke-sanderson82}
C.~P. Rourke and B.~J. Sanderson.
\newblock {\em Introduction to piecewise-linear topology}.
\newblock Springer Study Edition. Springer-Verlag, Berlin-New York, 1982.
\newblock Reprint.

\bibitem[ST23a]{ShTy}
A.~Shapira and M.~Tyomkyn.
\newblock Weakly saturated hypergraphs and a conjecture of {T}uza.
\newblock {\em Proc. Amer. Math. Soc.}, 151:2795--2805, 2023.

\bibitem[ST23b]{skotnica-tancer23}
M.~Skotnica and M.~Tancer.
\newblock N{P}-hardness of computing {PL} geometric category in dimension 2.
\newblock {\em SIAM J. Discrete Math.}, 37(3):2016--2029, 2023.

\bibitem[Tuz88]{Tuza88}
Z.~Tuza.
\newblock Extremal problems on saturated graphs and hypergraphs.
\newblock volume~25, pages 105--113. 1988.
\newblock Eleventh British Combinatorial Conference (London, 1987).

\bibitem[Tuz92]{Tuza92}
Z.~Tuza.
\newblock Asymptotic growth of sparse saturated structures is locally
  determined.
\newblock {\em Discrete Math.}, 108(1-3):397--402, 1992.
\newblock Topological, algebraical and combinatorial structures. Frol\'{\i}k's
  memorial volume.

\bibitem[TZ23]{TZ1}
N.~Terekhov and M.~Zhukovskii.
\newblock Weak saturation in graphs: a combinatorial approach.
\newblock {\em arXiv preprint arXiv:2305.11043}, 2023.

\bibitem[TZ24]{TZ2}
N.~Terekhov and M.~Zhukovskii.
\newblock Weak saturation rank: a failure of linear algebraic approach to weak
  saturation.
\newblock {\em arXiv preprint arXiv:2405.17857}, 2024.

\end{thebibliography}

\end{document}